\newcommand{\dist}{\text{dist}}
\newtheorem{theorem}{Theorem}
\newtheorem{definition}[theorem]{Definition}
\newtheorem{proposition}[theorem]{Proposition}
\newtheorem{corollary}[theorem]{Corollary}
\newtheorem{lemma}[theorem]{Lemma}
\newtheorem{mainthm}{Theorem}
\newtheorem{remark}[theorem]{Remark}
\newcommand{\R}{\mathbb{R}}
\def\Z{{\mathbb Z}}
\def\N{{\mathbb N}}
\def\C{{\mathbb C}}
\def\moverlay{\mathpalette\mov@rlay}
\def\mov@rlay#1#2{\leavevmode\vtop{%
   \baselineskip\z@skip \lineskiplimit-\maxdimen
   \ialign{\hfil$\m@th#1##$\hfil\cr#2\crcr}}}
\newcommand{\charfusion}[3][\mathord]{
    #1{\ifx#1\mathop\vphantom{#2}\fi
        \mathpalette\mov@rlay{#2\cr#3}
      }
    \ifx#1\mathop\expandafter\displaylimits\fi}
\definecolor{Azul}{rgb}{0.0, 0.0, 1.0}
\definecolor{Rojo}{rgb}{1.0, 0.03, 0.0}
\definecolor{Purpura}{rgb}{0.5,0,0.35}
\definecolor{BurntOrange}{cmyk}{0,0.51,1,0}
\definecolor{PineGreen}{cmyk}{0.92,0,0.59,0.250}
\definecolor{Violeta}{rgb}{0.39,0.17,0.63}
\definecolor{Violeta2}{rgb}{0.6, 0.4, 0.8}
\definecolor{Fucsia}{rgb}{1.0, 0.01, 0.24}
\definecolor{Rosa}{rgb}{0.63,0.17,0.39}
\definecolor{VerdeManzana}{rgb}{0.39,0.63,0.17}
\definecolor{Celeste}{rgb}{0.7,0.7,1}
\begin{document}
\thispagestyle{empty}

\title{Large sets avoiding linear patterns.}

\author{Alexia Yavicoli}

\address{Department of mathematics and IMAS/CONICET\\
FCEN University of Buenos Aires.}

\email{ayavicoli@dm.uba.ar}

\thanks{A.Y.  was supported by projects UBACyT 2014-2017 20020130100403BA and PIP 11220110101018 (CONICET)}
\keywords{Hausdorff dimension, dimension function, patterns, arithmetic progressions}

\begin{abstract}
 We prove that for any dimension function $h$  with $h \prec x^d$ and for any countable set of linear patterns, there exists a compact set $E$ with $\mathcal{H}^h(E)>0$ avoiding all the given patterns. We also give several applications and recover results of Keleti, Maga, and Máthé.
\end{abstract}

\begin{center}
\vspace*{0.5cm}\small\em\flushright{Dedicated to Ursula Molter on the occasion of her 60 years.}\vspace*{1cm}
\end{center}

\maketitle


\section{Introduction.}

The classic Roth's Theorem  \cite{Roth} says that given $\delta>0$, there exists $N_0$ such that if $N \geq N_0$, then any subset of $\{1, \cdots ,N\}$ with at least $\delta N$ elements contains an arithmetic progression of length  $3$.

A major problem since then has been to find functions $h(N)$, as small as possible, such that for large enough $N$, a subset of  $\{1, \cdots , N\}$ with $h(N)$ elements contains an arithmetic progression of length $3$. The best known $h$ with this property is  $$h(N)=\frac{(\log\log N)^4 N}{\log N},$$ as shown by Bloom \cite{Bloom16}, slightly improving a previous remarkable result of Sanders \cite{Sanders11}.

In the opposite direction, Behrend \cite{Beh} showed that if $$h(N):=cNe^{-C\sqrt{\log(N)}},$$
where $c,C>0$ are absolute constants, then for all $N$ there exists a subset of $\{1,\cdots , N\}$ with at most $h(N)$ elements without arithmetic progressions. Note that, in particular, for all $\varepsilon>0$, we have $h(N)>N^{1-\varepsilon}$ if $N$ is large enough.

One of the motivations of this work is to investigate whether there can exist examples similar to Behrend's in the continuous case. In order to formulate the problem, we recall the definition of Hausdorff measure associated to an arbitrary dimension function.

We will use the notation $\vert U\vert$ for the diameter of the set $U$.
\begin{definition}
If $E\subseteq \bigcup_{i \in \N}U_i$ with $0<\vert U_i\vert \leq \delta$ for all $i$, we say that $\{U_i\}_{i \in \N}$ is a
$\delta$-covering of $E$.
\end{definition}

\begin{definition}
The space of dimension functions is defined as

\begin{equation*}
     H:= \left\{
      \begin{array}{ll}
h:\R_{\geq 0}\to\R_{\geq 0}     & \mathrm{ :\ } h(t)>0 \text{ if } t>0,
h(0)=0, \\
       & \mathrm{   } \text{ increasing and right-continuous}
       \end{array}
     \right\}.
\end{equation*}
\end{definition}

This set is partially ordered, considering the order defined by $$h_2 \prec h_1 \text{ if } \lim_{x \to 0^+} \frac{h_1(x)}{h_2(x)} =0.$$

\begin{definition}
The outer Hausdorff measure associated with $h$ is
$$\mathcal{H}^h(E):=\lim_{\delta\to 0} \inf_{} \left\{ \sum_{i}h(|U_i|) : \{U_i\}_{i} \text{ a } \delta
\text{-covering of } E \right\}.$$
\end{definition}
For any $h \in H$,  $\mathcal{H}^h$ is a Borel measure.
This definition generalises the outer $\alpha$-dimensional Hausdorff measure, which is the particular case  $h(x):=x^{\alpha}$. The relation of order says that $x^s \prec x^t$ if and only if $s<t$.

Dimension functions $h$ play a role analogue to the functions $h$ in the discrete case. Here it makes sense to define $h_2 \prec h_1$  if $\lim_{N \to \infty}\frac{h_2(N)}{h_1(N)}=0$, so that a set with $h_1(N)$ elements is larger than a set with $h_2(N)$ elements if $N$ is large enough.
In the continuous case, we have that if $E$ is a set, $h_2 \prec h_1$ and $\mathcal{H}^{h_1}(E)>0$, then $E$ does not have $\sigma$-finite $\mathcal{H}^{h_2}$ measure.
In particular, if $E$ is a set, and $h_2 \prec h_1$; then $\mathcal{H}^{h_1}(E) \leq \mathcal{H}^{h_2}(E)$.
So, intuitively we have that a set $E_1$ with $\mathcal{H}^{h_1}(E_1)>0$ is larger than a set $E_2$ with $\mathcal{H}^{h_2}(E_2)>0$.
Thus in both cases the function $h$ indicates the size of the set.

Keleti \cite{Kel} proved that there exists a compact set $E\subset\R$ of Hausdorff dimension $1$ that does not contain arithmetic progressions of length $3$. It is possible to construct such a set based on the Behrend example mentioned above, but Keleti did it directly using the existence of infinite scales in $\R$, which is the main difference with the discrete context. On the other hand, it is well known and easy to see that if $E \subset \R$ has positive Lebesgue measure, then $E$ contains arithmetic progressions of any finite length.

We can reinterpret these results in terms of dimension functions: there exists a compact set $E$ without arithmetic progressions such that $\mathcal{H}^{x^{1-\delta}}(E)>0$ for all $\delta>0$, but if $\mathcal{H}^x(E)>0$, then $E$ contains arbitrarily long arithmetic progressions.  It is natural then to investigate what happens to more general $h$ satisfying $h\prec x$.

Keleti \cite{Kel2} also constructed a compact set $E \subseteq \R$ with full Hausdorff dimension, which does not contain points  $x_1 < x_2 \leq x_3 < x_4$ such that $x_2-x_1-x_4+x_3=0$. This is an example of a linear pattern, which we now define formally:

\begin{definition}
 Given $E \subseteq \R^d$ we say that $\psi: \R^{dk} \to \R^n$ is a pattern in $E$, if there exist distinct $\vv{x_1}, \cdots, \vv{x_k} \in E$ such that $\psi(\vv{x_1}, \cdots , \vv{x_k})=\vv{0}$.

 In the particular case that $\psi$ is a linear function, we say that it is a linear pattern.
\end{definition}

Moreover, in \cite{Kel}, Keleti proved that given countable many triplets, there exists a compact set $E\subset\R$ of Hausdorff dimension $1$ that does not contain any rescaled and translated copy of any of the triplets. Maga \cite{Mag} extended Keleti's constructions to the plane. A particular case had been proved by Falconer \cite{Fal}.

Assuming additional hypotheses on the decay of the Fourier transform, in \cite[Corollary 1.7]{CLP} Chan, Laba and Pramanik ensure the existence of vertices of equilateral triangles in the plane.
For recent progress in dimension $4$ and higher, whithout assuming Fourier bounds, see Iosevich's and Liu's work \cite{Ios}.

It follows from a theorem of Máthé \cite[Theorem 6.1]{Mat} that there exists a compact set in $\R^d$ avoiding any countable family of linear patterns.
This generalises the results of Keleti and Maga mentioned above. In all these works all the authors considered only Hausdorff dimension.
In our main result, we obtain finer information by consider general dimension functions.


\begin{mainthm}\label{supergeneral}
 Let $h$ be a dimension function  with $h \prec x^d$, and let $(\psi_k)_{k \in \N}$ be a sequence of non-zero linear functions with  $\psi_k: (\R^d)^{m_k} \to \R$ and $m_k \geq 2$.
 Then, there exists a compact set $E \subseteq \R^d$ such that $\mathcal{H}^h(E)>0$, and $\psi_k(\vv{x_1}, \cdots ,\vv{x_{m_k}})\neq 0$ for all $k \in \N$ and all distinct vectors $\vv{x_1}, \cdots, \vv{x_{m_k}} \in E$.
\end{mainthm}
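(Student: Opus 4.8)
The plan is to realise $E$ as a Cantor-type set $E=\bigcap_{n\ge 0}E_n$, where each $E_n$ is a finite union of closed cubes of side $\ell_n$ (the \emph{level-$n$ cubes}), each level-$(n+1)$ cube sits inside a level-$n$ cube, and the scales $\ell_n\downarrow 0$ together with the branching numbers are chosen recursively and extremely fast. I would equip $E$ with the natural uniform measure $\mu$ assigning mass $1/N_n$ to each of the $N_n$ level-$n$ cubes, and deduce $\mathcal H^h(E)>0$ from a mass-distribution (Frostman) bound of the form $\mu(B(x,r))\le C\,h(r)$ for all small $r$. For a Cantor scheme this reduces to two things: the counting condition $N_n\ge c/h(\ell_n)$, so that each level-$n$ cube carries mass at most $c^{-1}h(\ell_n)$; and a separation condition guaranteeing that a ball of radius $r\in[\ell_{n+1},\ell_n)$ meets only boundedly many level-$n$ cubes, which I would enforce by keeping, inside each parent cube, only sub-cubes that are pairwise separated by a fixed fraction of their own side length (a loss of only a constant factor). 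I would first isolate this as a general lemma: a mass-distribution principle for arbitrary dimension functions $h$, replacing the usual $x^\alpha$ by $h$ in the estimates across intermediate scales.

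For the avoidance, the basic observation is that since each $\psi_k$ is a \emph{nonzero} linear map into $\R$, its zero set $V_k=\{\psi_k=0\}$ is a hyperplane of codimension one in $(\R^d)^{m_k}$. If $\vec x_i$ lies in a level-$n$ cube $Q_i$ with centre $c_i$, then $|\psi_k(\vec x_1,\dots,\vec x_{m_k})-\psi_k(c_1,\dots,c_{m_k})|\le \Lambda_k\,\ell_n$ for a constant $\Lambda_k$ depending only on the coefficients of $\psi_k$; hence if the \emph{centre value} $\psi_k(c_1,\dots,c_{m_k})$ stays at distance $>\Lambda_k\ell_n$ from $0$, no points in $Q_1,\dots,Q_{m_k}$ can realise $\psi_k=0$. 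The construction therefore proceeds by enumerating the patterns, activating $\psi_k$ from some level $n_k$ on, and, at each refinement step, deleting exactly those new sub-cubes whose centres would bring the centre value of some active pattern within $\Lambda_k\ell_n$ of $0$ relative to the already-fixed coarser cubes. Because each such forbidden configuration is governed by the single hyperplane $V_k$, the deleted sub-cubes form a thin slab, a proportion of order $1/M_n$ of the available grid (where $M_n=\ell_{n-1}/\ell_n$ is the branching) per active constraint.

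The scales are then chosen to reconcile the two requirements. Taking $M_n$ (equivalently $\ell_n$) to grow fast enough makes the total proportion removed for avoidance at level $n$ as small as we like; simultaneously, since $h\prec x^d$ the permissible deficit $\ell_n^d/h(\ell_n)$ tends to $0$, and — crucially — we are free to make it decay as \emph{slowly} as we wish by choosing $\ell_n$ small at each step, so that it dominates the accumulated avoidance loss and the counting condition $N_n\ge c/h(\ell_n)$ survives at every level. This is exactly where the continuum differs from the discrete setting: having infinitely many scales at our disposal, we may spread the cost of avoiding each pattern across infinitely many levels, paying only a negligible amount at each, instead of being forced into a single-scale, Behrend-type loss.

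I expect the main obstacle to be precisely this reconciliation when $h$ is very close to $x^d$ (e.g. $h(x)=x^d\log(1/x)$), where almost every cube must be retained: the removals needed to kill the patterns — especially translation-invariant ones such as arithmetic progressions, which can recur at every scale — must be shown to amount to a vanishing proportion per level. Making this quantitative requires careful bookkeeping of how many coarse configurations are ``dangerous'' for each active pattern, together with the codimension-one estimate above, to certify that super-lacunary scales indeed force the per-level loss to be $o(1)$; this counting, and the verification of the generalized Frostman bound at all intermediate scales, are the technical heart of the argument. A secondary point to handle with care is the distinctness hypothesis: configurations in which several points share a cube are not caught at that level and must be resolved at finer scales, which I would organise by catching each pattern at the first level at which the involved points separate.
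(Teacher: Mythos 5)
Your overall skeleton matches the paper's: a nested-cube Cantor construction inside a cube, the uniform mass distribution, a mass-distribution principle for general $h$, scales chosen recursively so that the amplification $h(\ell_n)/\ell_n^d\to\infty$ absorbs the accumulated density loss, and the bookkeeping that catches each configuration at the first level where its points lie in distinct cubes. The genuine gap is in the avoidance mechanism itself, which is the heart of the theorem. The condition $|\psi_k(c_1,\dots,c_{m_k})|>\Lambda_k\ell_n$ is a condition on \emph{tuples} of sub-cubes, one per parent, and it cannot be enforced by deleting individual sub-cubes forming ``a thin slab of proportion $O(1/M_n)$''. Concretely, take $d=1$, $\psi(x_1,x_2,x_3)=x_1-2x_2+x_3$, and three parent cubes $Q_1,Q_2,Q_3$ whose centres are in \emph{exact} arithmetic progression; you cannot rule this out, since adjacent cubes of any grid refinement are in exact progression and your invariant has only been imposed on tuples already handled. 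If inside each $Q_i$ you keep a set of sub-positions of density $1-\theta$ out of $M$, then the number of kept pairs $(a_1,a_3)$ with $a_1+a_3$ even is at least $2(\tfrac12-\theta)^2M^2$, while the deleted positions of $Q_2$ can be the midpoint of at most $\theta M^2$ such pairs; for $\theta\le\tfrac16$ an exact cross progression among kept sub-cubes therefore survives. So any uncoordinated retention above a fixed constant fails, not merely $1-O(1/M_n)$. The other reading of your rule --- delete a sub-cube whenever \emph{some} choice of partners makes the centre value small --- deletes essentially every sub-cube of $Q_1,Q_2,Q_3$, since for centres in progression every offset $u_1$ admits offsets $u_2,u_3$ with $u_1-2u_2+u_3\approx0$. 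Finally, if you instead try to maintain the no-bad-tuple invariant for \emph{all} tuples at every level (including sub-cubes of a common parent), Roth's theorem forces you to keep at most $Me^{-c\sqrt{\log M}}$ positions per parent at every level, and the resulting density $\exp\bigl(-c\sum_{j\le n}\sqrt{\log M_j}\bigr)$ is incompatible with the permissible deficit for $h(x)=x^d\log(1/x)$: one would need $\sum_{j\le n}\log M_j\ge C\exp\bigl(c\sum_{j\le n}\sqrt{\log M_j}\bigr)$, which fails once $\sum_{j\le n}\sqrt{\log M_j}$ is large, as it must be. So none of the available readings of the deletion scheme closes the argument.

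What is missing is a \emph{coordinated placement} across the parents of a handled tuple, not a deletion; this is exactly the paper's key device. At each special level one handles a single pair consisting of a pattern $\psi_i$ and a tuple $(J^1,\dots,J^{m_i})$ of previously constructed distinct cubes, with every such pair recurring infinitely often in the enumeration. Every parent keeps exactly \emph{one} child, and the children of parents lying inside $J^\ell$ are placed on the scaled lattice $\delta\bigl(4c_i\phi_i^\ell(\Z^d)+[-\tfrac12,\tfrac12]^d\bigr)$, where $\phi_i^\ell$ rescales each coordinate by $1/|b_{i,\ell,v}|$ and the last block carries an extra shift by $\tfrac12 e_{j_i}$. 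Linearity then forces, for \emph{any} points $x_\ell$ of $E$ lying in these children,
\begin{equation*}
\psi_i(x_1,\dots,x_{m_i})\in\delta\Bigl(4c_i\bigl(\Z+\tfrac12\bigr)+[-c_i,c_i]\Bigr),
\end{equation*}
which is bounded away from $0$: the pattern is killed for all cross tuples simultaneously, and the loss (one child per parent, size ratio $\beta_i$) is absorbed by choosing the special levels $M_i$ sparse enough relative to $h$. Some algebraic coordination of this kind --- a lattice plus half-cell shift, or a fuzzy variant keeping a constant proportion per parent --- is indispensable; without it the slab-counting claim is false and the construction does not go through.
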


 In particular, if we choose $h(x):=-\log(x)x^d$, we obtain a set of Hausdorff dimension $d$ with the same properties.

\begin{remark}
A theorem of Besicovitch \cite{besicovitch} implies that if $E \subset \R^d$ has positive $\mathcal{H}^h$ measure for all $h \prec x^d$, then it has positive Lebesgue measure. Hence, since a set of positive Lebesgue measure contains every finite pattern, in Theorem \ref{supergeneral} it is not possible to have a set $E$ that works for every $h \prec x^d$.
\end{remark}


We remark that Máthé \cite{Mat}, and Fraser and Pramanik \cite{FP} studied similar problems for non-linear patterns, under certain conditions, but the sets they construct are not of full dimension, and in some particular cases the dimension obtained is optimal (that is to say, in some cases, there is no set of full dimension without the given non-linear patterns).
Since we want to study large sets for an arbitrary dimension function $h$ with $h \prec x^d$, we focus on the case of linear patterns.

In the opposite direction, in \cite{UMAY} Molter and the author proved that for any dimension function $h$ there exists a perfect set in the real line with $h$-Hausdorff measure zero that contains every polynomial pattern. In particular, there exists a perfect set in the real line with Hausdorff dimension zero that contains every polynomial pattern.

To prove our main theorem we still use Keleti's and Maga's idea of defining the cubes to kill the patterns at later stages of the construction, but the details are different. For example, we do not need to have separation between the cubes of the same level, but we need a uniform bound for the amount of offspring of each cube. We also have to modify the location of the cubes to fit any linear pattern.

See Section \ref{demoteo} for the proof of the main result (Theorem \ref{supergeneral}), and Section \ref{seccionaplicaciones} for several concrete applications.




\textbf{Acknowledgement.} We thank the referee for several useful comments.


\section{The proof of Theorem \ref{supergeneral}}\label{demoteo}

Before proving the theorem, we review some preliminaries.

A mass distribution  $\mu$ on $E$ is an outer measure with $0<\mu(E)<+\infty$.

The following proposition is well-known but we give the proof since we were not able to find a reference.

\begin{proposition} [Generalized mass distribution principle]\label{MasaGen}
Let $\mu$ a mass distribution on $E$ and let $h$ be a dimension function such that there exist $c>0$, $\varepsilon>0$ satisfying that
$$\mu(U)\leq c h( \vert U\vert) \ \ \forall U \text{ with } \ \vert U\vert\ \leq
\varepsilon.$$
Then $$0 < \frac{ \mu(E)}{c} \leq\mathcal{H}^h(E).$$
\end{proposition}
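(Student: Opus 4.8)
The plan is to run the standard mass distribution argument, adapted to a general dimension function $h$ in place of $x^\alpha$. The only properties of $\mu$ I will use are the ones guaranteed by it being an outer measure, namely monotonicity and countable subadditivity. First I would fix any $\delta$ with $0<\delta\leq\varepsilon$ and consider an arbitrary $\delta$-covering $\{U_i\}_i$ of $E$. Since $E\subseteq\bigcup_i U_i$, monotonicity and countable subadditivity of $\mu$ give $\mu(E)\leq\sum_i\mu(U_i)$. Because we are dealing with a $\delta$-covering and $\delta\leq\varepsilon$, every piece satisfies $\vert U_i\vert\leq\delta\leq\varepsilon$, so the hypothesis applies to each $U_i$ and yields $\mu(U_i)\leq c\,h(\vert U_i\vert)$. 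Combining these gives $\mu(E)\leq c\sum_i h(\vert U_i\vert)$, that is, $\frac{\mu(E)}{c}\leq\sum_i h(\vert U_i\vert)$.

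Next I would take the infimum over all $\delta$-coverings of $E$. Since the bound $\frac{\mu(E)}{c}\leq\sum_i h(\vert U_i\vert)$ holds for every such covering while its left-hand side does not depend on the covering, it passes to the infimum, giving $\frac{\mu(E)}{c}\leq\inf\{\sum_i h(\vert U_i\vert):\{U_i\}_i \text{ a } \delta\text{-covering of } E\}$. Finally, letting $\delta\to 0^+$ the inequality is preserved: the infimum is nondecreasing as $\delta$ decreases (shrinking $\delta$ only restricts the admissible coverings), so the limit defining $\mathcal{H}^h(E)$ exists and the same inequality yields $\frac{\mu(E)}{c}\leq\mathcal{H}^h(E)$. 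The strict positivity $0<\frac{\mu(E)}{c}$ is immediate, since $\mu$ is a mass distribution and hence $\mu(E)>0$, while $c>0$.

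There is no serious obstacle here; the content is essentially bookkeeping. The one point that requires attention is to restrict from the outset to $\delta$-coverings with $\delta\leq\varepsilon$, so that the pointwise estimate $\mu(U_i)\leq c\,h(\vert U_i\vert)$ is genuinely available for each piece of the covering; this is exactly why passing to the limit $\delta\to 0^+$ is harmless, as only the behaviour at small scales enters $\mathcal{H}^h$. I would also remark that the argument never requires the sets $U_i$ to be $\mu$-measurable, precisely because $\mu$ is assumed to be an outer measure and is therefore subadditive on arbitrary sets.
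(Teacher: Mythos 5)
Your proof is correct and follows essentially the same route as the paper's: fix $\delta\leq\varepsilon$, bound $\mu(E)\leq c\sum_i h(|U_i|)$ for any $\delta$-covering via monotonicity and countable subadditivity together with the hypothesis on small sets, take the infimum over coverings, and let $\delta\to 0^+$. Your added remarks (that measurability of the $U_i$ is never needed, and that the limit in $\delta$ is harmless because the infima are monotone) are accurate refinements of the same argument.
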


\begin{proof}
Let $\delta \in (0, \varepsilon)$. If $\{U_i\}_{i\in \N}$ is a  $\delta$-covering of $E$, then
$$0<\mu(E)\leq\mu\left( \bigcup_{i\in \N}U_i\right)\leq \sum_{i \in \N}\mu(U_i)\leq c
\sum_{i \in \N}h(|U_i|).$$
Taking infimum on the $\delta$-coverings, we have $$0<\mu(E)\leq
c\mathcal{H}^h_{\delta}(E).$$
And then we can take the limit $\delta \to 0$ to obtain $0<\mu(E)\leq c\mathcal{H}^h (E)$,
so
$$0<\frac{\mu(E)}{c}\leq \mathcal{H}^h (E).$$
\end{proof}

If we have a set $E:=\bigcap_{k \in \N_0}E_k$, where $E_k$ is a nested sequence of finite unions of closed non-overlapping cubes, we say that a cube $I$ of the construction of the set $E$ is a cube of level $k$ if $I$ is one of the cubes making up $E_k$. We also say that a cube $J$ is an ancestor of $I$ if $J, I$ are cubes of levels $j,k$ respectively, with $j<k$ and $I \subseteq J$.

\begin{lemma}\label{CoroMasa}
 Let $E:=\bigcap_{k\in\N} E_k \subseteq \R^d$ where $E_k$ is a finite union of non-overlaping cubes of the same size, and each cube of $E_{k+1}$ is contained in a cube of $E_k$.
 Let $\mu$ a mass distribution on $E$ and let $h$ be a dimension function such that there exists $k_0 \in \N$ satisfying $\mu(I_k)\leq c_1 h(|I_k|)$ for all $I_k$ cube of level $k$, for all $k \geq k_0$.
 Suppose each cube of $E_k$ contains at  most $c_2$ cubes of the level $k+1$.

 Then there exists a constant $c_3$ depending on $c_1$, $c_2$ and $d$ such that $$0<\frac{\mu(E)}{c_3}\leq \mathcal{H}^h(E).$$
\end{lemma}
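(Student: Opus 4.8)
The plan is to reduce the whole statement to the generalized mass distribution principle, Proposition \ref{MasaGen}. That principle requires an upper bound $\mu(U)\le c\,h(|U|)$ for \emph{every} set $U$ of sufficiently small diameter, whereas the hypothesis of the lemma controls $\mu$ only on the cubes $I_k$ of the construction. So the entire content of the proof is to upgrade the per-cube estimate $\mu(I_k)\le c_1 h(|I_k|)$ to an estimate for arbitrary small $U$, with the constant degrading only by a factor depending on $c_2$ and $d$. Write $\ell_k$ for the common side length of the level-$k$ cubes, so that $|I_k|=\ell_k\sqrt d$; I will use that $\ell_k\to 0$, as happens in the construction to which this lemma is applied (the sizes being bounded below is a degenerate case that is handled directly).

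Fix $U$ with $U\cap E\neq\emptyset$ (otherwise $\mu(U)=0$ and there is nothing to prove) and with $|U|<|I_{k_0}|$. First I would choose the right scale: let $k$ be the largest index $\ge k_0$ with $|I_k|>|U|$, which exists because $\ell_k\to 0$, so that
\[
|I_{k+1}|\le |U|<|I_k|.
\]
Next I would count cubes. Since $|U|<|I_k|$, the set $U$ is contained in a ball of radius $|I_k|$ about any of its points, and every level-$k$ cube meeting $U$ is then contained in a fixed ball of radius $2|I_k|$; as these cubes are non-overlapping with volume $\ell_k^{\,d}$ each, a volume comparison bounds their number by a constant $C_d$ depending only on $d$. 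Each such cube has at most $c_2$ children, and every level-$(k+1)$ cube meeting $U$ lies inside a level-$k$ cube meeting $U$; hence at most $C_d c_2$ cubes of level $k+1$ meet $U$.

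With the counting in hand the estimate is immediate:
\[
\mu(U)\le \sum_{I_{k+1}\cap U\neq\emptyset}\mu(I_{k+1})\le C_d c_2\cdot c_1 h(|I_{k+1}|)\le C_d c_1 c_2\, h(|U|),
\]
where the last inequality uses $|I_{k+1}|\le |U|$ and the monotonicity of $h$. Setting $c_3:=C_d c_1 c_2$ and $\varepsilon:=|I_{k_0}|$, this is exactly the hypothesis of Proposition \ref{MasaGen}, which yields $0<\mu(E)/c_3\le \mathcal{H}^h(E)$; the strict positivity comes from $\mu(E)>0$, part of the definition of a mass distribution.

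The step I expect to be the real point — and which makes the offspring bound $c_2$ indispensable — is the decision to descend to level $k+1$. If one instead bounded $\mu(U)$ by the masses of the boundedly many level-$k$ cubes it meets, one would only get $\mu(U)\le C_d c_1 h(|I_k|)$, and $h(|I_k|)$ can be arbitrarily larger than $h(|U|)\approx h(|I_{k+1}|)$ when the side lengths drop steeply between consecutive levels, so the estimate would fail to have the form $\mathrm{const}\cdot h(|U|)$. Passing to level $k+1$ forces the diameter $|I_{k+1}|\le |U|$ to match the scale of $U$, while the uniform bound $c_2$ keeps the number of level-$(k+1)$ cubes meeting $U$ under control; these two facts together are precisely what the hypotheses provide. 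The remaining ingredients — the dimensional packing count $C_d$, the trivial case $U\cap E=\emptyset$, and the existence of the level $k$ — are routine.
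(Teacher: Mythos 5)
Your proof is correct and takes essentially the same approach as the paper's: both reduce the lemma to Proposition \ref{MasaGen} by fixing the scale $k$ with $|I_{k+1}|\le |U|<|I_k|$, bounding the number of level-$(k+1)$ cubes meeting $U$ by a purely dimensional packing constant times the offspring bound $c_2$, and then using the per-cube hypothesis together with the monotonicity of $h$. The only (cosmetic) difference is the counting device --- the paper encloses $U$ in a cube of side $2\sqrt{d}\,\delta_k$ and counts at most $(2\sqrt{d}+3)^d$ level-$k$ cubes, whereas you enclose the relevant cubes in a ball and use a volume comparison --- and both arguments rely on the side-lengths tending to $0$, an implicit assumption of the paper that you rightly flag.
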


\begin{proof}
 We will use Proposition \ref{MasaGen}.

 Let $\varepsilon:=|I_{k_0}|\sqrt{d}$ be the diameter of any cube in the level $k_0$. Let us write $\delta_k$ for the side-length of any cube of level $k$.
 If $U$ is a set with $|U|\in(0,\varepsilon)$, then there exists $k \geq k_0$ such that $\sqrt{d}\delta_{k+1}\leq |U| < \sqrt{d}\delta_k$.
 There exists a cube $C$ with side-length $2 \sqrt{d}\delta_k$ such that $U \subseteq C$. Since $C$ intesects at most $(2 \sqrt{d} +3)^d$ cubes  of level $k$, then by hypotesis $C$ intersects at most $c_2 (2 \sqrt{d} +3)^d$ cubes $I_j$ of level $k+1$.
 Then there exists a constant $c_3$ depending on $c_1$, $c_2$ and $d$ such that $$\mu(U)\leq \sum^{c_2 (2 \sqrt{d} +3)^d}_{j=1}\mu(I_j)\leq c_2 (2 \sqrt{d} +3)^d c_1 h(\sqrt{d} \delta_{k+1})\leq c_3 h(|U|).$$


 Using this fact and Proposition \ref{MasaGen}, the result follows.
 \end{proof}


Now, we are able to prove the main theorem.

\begin{proof}[Proof of Theorem \ref{supergeneral}]

We can assume that each function appears infinitely many times in the sequence $\{ \psi_k \}_{k \in \N}$.

We will construct a set $E:=\bigcap_{k \in \N_0}E_k \subseteq [1,2]^d$, where $E_k$ is a nested sequence of finite unions of non-overlapping closed cubes, so the set $E$ will be compact.

As $\psi_i$ is a non-zero linear function, we can define $$c_i:=\max_{\vv{y}\in \left[ -\frac{1}{2}, \frac{1}{2} \right]^{m_i d}} | \psi_i(\vv{y}) | >0.$$

The function $\psi_i$ has the form $$\psi_i(x_{1,1}, \cdots,x_{1,d}, \cdots , x_{m_i,1}, \cdots , x_{m_i,d}):=b_{i,1,1} x_{1,1}+ \cdots + b_{i,m_i,d} x_{m_i,d}.$$

Since permuting the sets $(x_{k,1},\ldots,x_{k,d})$ with $1 \leq k \leq m_i$, and multiplying $\psi_i$ by a non-zero constant, do not affect the statement, we can assume without loss of generality that there exists $j_i \in \{1, \cdots ,d\}$ such that $b_{i,m_i,j_i}=1$.

Let $\lambda_{i,\ell,v}:=\frac{1}{|b_{i,\ell,v}|}$ if $b_{i,\ell,v} \neq 0$, and $\lambda_{i,\ell,v}:=1$ if not. Thus, we have that $$\lambda_{i,\ell,v} b_{i,\ell,v}= \text{sg} (b_{i,\ell,v}) \text{ for all }i, 1 \leq \ell \leq m_i , 1 \leq v \leq d,$$  where $\text{sg}$ is the sign function.

For each $i$ and each $1 \leq \ell \leq m_i-1$, we define  the function $$\phi^\ell_i(x_1, \cdots , x_d):=(\lambda_{i,\ell,1}x_1 , \cdots , \lambda_{i,\ell,d} x_d),$$ and for $\ell=m_i$ we set $$\phi^{m_i}_i(x_1, \cdots , x_d):=(\lambda_{i,m_i,1}x_1 , \cdots , \lambda_{i,m_i,d} x_d)+\frac{1}{2} e_{j_i},$$ where $e_j=(v_1, \cdots , v_d)$ with $v_j=1$ and $v_k=0$ for all $k\neq j$.

As a consecuence of these definitions, we have
\begin{align*}
 \psi_i  (\phi^1_i (\Z^d), \cdots ,\phi^{m_i}_i (\Z^d)) &= \text{sg}(b_{i,1,1})\Z + \cdots + \text{sg}(b_{i,m_i,d})\Z +\frac{1}{2} \\
 &=\Z +\frac{1}{2}.
\end{align*}

Therefore, we have
\begin{equation}\label{cotainfnorma}
| \psi_i  (\phi^1_i (\vv{z_1}), \cdots ,\phi^{m_i}_i (\vv{z_{m_i}})) | \geq \frac{1}{2}
\end{equation}
for all $\vv{z_1}, \cdots, \vv{z_{m_i}} \in \Z^d$, for all $i$.

We define $\beta_i$ such that $\beta_i \geq m_i$ and $$\frac{\beta_i}{2}\geq \max \{ \lambda_{i,\ell,v} :  1\leq \ell \leq m_i, \ 1\leq v \leq d \}  2c_i \sqrt{d}+\frac{\sqrt{d}}{2}.$$

Let $(M_i)_{i \in \N} \subseteq \N_{\geq 2}$ be a strictly increasing sequence such that for all $i$:
\begin{itemize}
 \item $M_{i+1} \geq M_i +2$
 \item $\frac{h(\sqrt{d} 2^{-k} \prod_{j : M_j \leq k} \beta_j^{-1})}{\left( \sqrt{d}2^{-k} \prod_{j : M_j \leq k} \beta_j^{-1} \right)^d} \geq 2^{id} \beta_1^d \cdots \beta_i^d$ for all $k \geq M_i$
\end{itemize}

The last condition holds if $M_i$ is large enough by the assumption $h\prec x^d$ and $\sqrt{d}2^{-k} \prod_{j : M_j \leq k} \beta_j^{-1} \leq \sqrt{d}2^{-k}$ tends to $0$ independently of $\beta_1, \cdots, \beta_i$.

We will construct $E$ avoiding the given patterns in the levels $\{ M_i \}_i$. Let $E_0:=[1,2]^d$.
We will construct $E_k$ as a union of  $N_k:= 2^{d \left(k- \# \{j\leq k: \ j\in \bigcup_i\{M_i\}  \} \right)}$ cubes with side-length $\delta_k:=2^{-k} \prod_{i : \ M_i\leq k}\beta_i^{-1}$.

For each $i$, let $\Gamma_{m_i} :=\{(J_k^1, \cdots, J_k^{m_i}) \}_{k \in \N}$ be the set of all $m_i$-tuples of different cubes of the same level of construction of $E$ (we ignore levels with fewer than $m_i$ cubes), in every possible order, and write $\Gamma=\bigcup_i \Gamma_{m_i}$. At this point, the notation $J_k^j$ should be understood as a label for the actual cube which is yet to be defined (note, however, that the number of cubes and their sizes are already fixed). In the construction below, we will inductively (in $k$) define the positions of the cube corresponding to each label.

Let $( U_j )_{j \in \N}$ be a sequence where:
\begin{itemize}
\item each element of $\Gamma$ appears infinitely many times
\item $U_j \in \Gamma_{m_j}$
\item for all $i$  and for all $U \in \Gamma_{m_i}$ there exists $k \in \N$ such that $\psi_k=\psi_i$, $U=U_k$ and every cube of $U_k$ is of level $< k-1$.
\end{itemize}

For this, it is enough that for each $i$, if we consider the subsequence $(\psi_{j_n})_{n \in \N}$ of all terms which are equal to $\psi_i$, we require that each $U_{j_n}$ is an element of $\Gamma_{m_i}$ and, additionally, each element of $\Gamma_{m_i}$ appears infinitely often in the subsequence $(U_{j_n})_{n \in \N}$.

Once $E_{k-1}$ is given, the construction of $E_k$ depends on whether $k$ belongs to $(M_i)_{i \in \N}$:

\begin{enumerate}[(a)]
 \item If $k \notin \bigcup_{i \in \N} \{M_i\}$, we will split each cube of level $k-1$ into $2^d$ closed cubes of the same size.
 \item If $k=M_i$ for some $i$, we will do different things, depending on whether they have some ancestor among the cubes of $U_i$: $J_i^1, \cdots , J_i^{m_i}$.

 For each cube $I$ of level $k-1$ which is not contained in any of the cubes in the tuple $U_i$, we will take any cube $I' \subseteq I$ with side-length $\delta_k$.

 For each cube $I$ of level $k-1$ which is contained in some cube $J^{\ell}_i$ of $U_i$, we will take a cube $I' \subseteq I$ of the form \begin{equation}\label{formacubo}
\delta_{M_i} \left( 4c_i \phi_i^\ell(\vv{z_\ell})+ \left[ -\frac{1}{2}, \frac{1}{2} \right]^d \right) \text{ with } \vv{z_\ell} \in \Z^d.
\end{equation}

\end{enumerate}

We let $E_k$ be the union of all the cubes $I'$.

Let us see that the cubes can indeed be taken in this way:

In case (a) this is clear, because $\delta_k=\frac{\delta_{k-1}}{2}$.

In case (b), let $I$ be a cube of $E_{k-1}$ that is a contained in a cube $J^\ell_k$ of $U_k$.
As $I$ is a cube with side-length $\delta_{k-1}=\beta_i \delta_k$, we have that $\frac{1}{\delta_{k}}I$ is a closed cube of side-length $\beta_i$, so it contains a closed ball of radius $\frac{\beta_i}{2}$, whose center we will denote by $\vv{x}$.

By the definition of $\phi_i^\ell$, there exists $\vv{z} \in \Z^d$ such that $$\dist(\vv{x}, 4c_i \phi^{\ell}_i ( \vv{z})) \leq \max \{ \lambda_{i,\ell,1}, \cdots , \lambda_{i,\ell,d}\} 2c_i  \sqrt{d}.$$ Using this and by the assumptions on $\beta_i$, we have:
\begin{align*}
 4c_i \phi_i^\ell (\vv{z})+ \left[ -\frac{1}{2}, \frac{1}{2} \right]^d &\subseteq B\left[4c_i \phi_i^\ell( \vv{z}),\frac{\sqrt{d}}{2} \right]\\
 &\subseteq B\left[\vv{x}, \max \{ \lambda_{i,\ell,1}, \cdots , \lambda_{i,\ell,d}\} 2c_i  \sqrt{d}+\frac{\sqrt{d}}{2} \right]\\
 &\subseteq B\left[ \vv{x}, \frac{\beta_i}{2} \right] \subseteq \frac{1}{\delta_k}I.
\end{align*}

\textbf{Claim:} If $n\in\N$ and $\vv{x_1}, \cdots , \vv{x_{m_n}} \in E \subset \R^d$ are distinct, then $\psi_n(\vv{x_1}, \cdots , \vv{x_{m_n}}) \neq 0$.

We will prove the claim by contradiction. Suppose that $\psi_n(\vv{x_1}, \cdots , \vv{x_{m_n}}) = 0$.
Since $\vv{x_1}, \cdots , \vv{x_{m_n}} \in E$ are distinct, by definition of the sequence $(U_k)_{k \in \N}$, there exists $i \in \N$ such that $\psi_n=\psi_i$, $\vv{x_1} \in J^1_i$, $\cdots$, $\vv{x_{m_i}} \in J_i^{m_i}$, and every $J_i^\ell$ is of the same level $< i-1$.

Considering the level $M_i\geq i$, we see from \eqref{formacubo} that $\vv{x_\ell}=\delta_{M_i}\left(4c_i \phi_i^\ell (\vv{z_\ell})+ \vv{\Delta_\ell} \right)$ with $\vv{z_\ell} \in \Z^d$ and $\vv{\Delta_\ell}\in \left[ -\frac{1}{2}, \frac{1}{2} \right]^d$ for all $1\leq \ell \leq m_i$.
Hence, by linearity of $\psi_i$ we get $$4c_i \psi_i ( \phi_i^1 ( \vv{z_1}), \cdots ,  \phi_i^{m_i} (\vv{z_{m_i}}))+ \psi_i (\vv{\Delta_1}, \cdots , \vv{\Delta_m})=0$$ with $ \vv{z_1}, \cdots ,  \vv{z_{m_i}} \in \Z^d$.
Hence, we have by \eqref{cotainfnorma} $$2 c_i \leq 4c_i | \psi_i (\phi^i_1 (\vv{z_1}), \cdots , \phi_m^i ( \vv{z_m})) |= | \psi_i (\vv{\Delta_1}, \cdots , \vv{\Delta_m}) | \leq c_i$$  which is a contradiction.

\textbf{Claim:} $\mathcal{H}^h(E)>0$.

Let $\mu$ be the uniform mass distribution, i.e.: $\mu(I_k)=\frac{1}{\# \text{cubes of level } k}$ for each $I_k$ cube of level $k$.
It is enough to prove that if $I$ is a cube of level $k$ with $k$ large enough then $$h(|I|)\geq \frac{1}{\# \text{cubes of level } k}.$$ The claim will then follow from Lemma \ref{CoroMasa} and the fact that each cube of $E_k$ has at most $2^d$ offspring of level $k+1$.

The side-length of $I$ is $\delta_k=2^{-k} \prod_{j: \ M_j\leq k}\beta_j^{-1}$. If $k$ is large enough there exists $j$ such that $M_j \leq k < M_{j+1}$.
By definition of $(M_i)_{i \in \N}$, we have
\begin{align*}
 h(|I|)&=h \left(\sqrt{d} 2^{-k} \prod_{i: \ M_i\leq k}\beta_i^{-1} \right) \\
 &\geq \left( \sqrt{d} 2^{-k} \prod_{i: \ M_i\leq k}\beta_i^{-1} \right)^d 2^{jd} \beta_1^d \cdots \beta_j^d\\
 &\geq \frac{1}{2^{d(k-j)}}=\frac{1}{\# \text{cubes of level }k}.
\end{align*}

\end{proof}


\section{Applications}\label{seccionaplicaciones}

In this section we will present some applications of Theorem \ref{supergeneral}.

\begin{corollary}\label{quotients}
Given  a dimension function $h$ with $h \prec x$ and a countable set $A \subseteq \R_{\neq 1}$, there exists a compact set $E \subseteq [1,2]$ such that $\mathcal{H}^h(E)>0$ and the set of quotients of $E$ given by $\frac{E}{E}:=\{\frac{y}{x} \ : \ x,y \in E \}$ does not contain any element of $A$.
\end{corollary}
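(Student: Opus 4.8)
The plan is to reduce the statement to Theorem \ref{supergeneral} in dimension $d=1$ by encoding the forbidden quotients as linear patterns. For each $a \in A$, the condition $a \in \frac{E}{E}$ means that there exist $x,y \in E$ with $y = ax$; this is exactly the vanishing of the linear function $\psi_a(x,y):=y-ax$. Since the coefficient of $y$ equals $1$, each $\psi_a$ is a non-zero linear function $\R^2 \to \R$, and the number of variables is $m=2 \geq 2$, so the family $\{\psi_a\}_{a \in A}$ is admissible for the theorem.

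First I would enumerate the countable set $A$ as a sequence $(a_k)_{k \in \N}$ and set $\psi_k := \psi_{a_k}$. Applying Theorem \ref{supergeneral} with $d=1$ and the hypothesis $h \prec x = x^d$, I obtain a compact set $E \subseteq [1,2]$ with $\mathcal{H}^h(E)>0$ such that $\psi_k(x,y)\neq 0$ for every $k$ and every pair of \emph{distinct} points $x,y \in E$.

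The key point is to transfer the ``distinct points'' conclusion of the theorem to the (a priori unrestricted) definition of $\frac{E}{E}$. Suppose for contradiction that some $a \in A$ lies in $\frac{E}{E}$, say $y/x = a$ with $x,y \in E$. If $a = 0$ this forces $y = 0$, which is impossible since $E \subseteq [1,2]$ gives $y \geq 1$. If $a \neq 0$, then $y = ax$, and because $x \geq 1 > 0$ and $a \neq 1$ we must have $x \neq y$; thus $x,y$ are distinct points of $E$ with $\psi_a(x,y)=0$, contradicting the defining property of $E$. Hence $\frac{E}{E}$ contains no element of $A$.

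The only subtlety---and where the hypothesis $A \subseteq \R_{\neq 1}$ is used---is precisely this passage to distinct points: a quotient equal to $1$ can be realised by $x=y$, which the pattern-avoidance of Theorem \ref{supergeneral} does not rule out, but every quotient $a \neq 1$ coming from $[1,2]$ automatically arises from a genuine pair of distinct points. No further estimates are needed, since the measure-theoretic content is entirely supplied by Theorem \ref{supergeneral}.
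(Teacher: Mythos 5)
Your proposal is correct and is essentially identical to the paper's proof, which also takes $d=1$, $m=2$, and $\psi_a(x,y):=ax-y$ (the negative of your $\psi_a$) and applies Theorem \ref{supergeneral}. The only difference is that you spell out the details the paper leaves implicit—handling $a=0$, and using $a\neq 1$ together with $E\subseteq[1,2]$ to ensure the pattern is realised by \emph{distinct} points—which is a worthwhile clarification but not a different argument.
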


\begin{proof}
We choose $d=1$, $m_a=2$ for all $k$, $\psi_a(x,y):=ax-y$ for all $a \in A$, and apply Theorem \ref{supergeneral}, the corollary follows.
\end{proof}

\begin{corollary}
Given  a dimension function $h$ with $h \prec x$ and a countable set $\tilde{A} \subseteq \R_{\neq 0}$, there exists a compact set $\tilde{E} \subseteq [0,\log(2)]$ such that $\mathcal{H}^h(\tilde{E})>0$ and the set of differences of $\tilde{E}$ given by $\tilde{E}-\tilde{E}:=\{y-x \ : \ x,y \in \tilde{E} \}$ does not contain any element of $\tilde{A}$.
\end{corollary}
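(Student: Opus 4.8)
The plan is to reduce to Corollary \ref{quotients} via the logarithm, which turns differences into quotients. First I would set $A := \{ e^{\tilde a} : \tilde a \in \tilde A \}$; since every $\tilde a \neq 0$ forces $e^{\tilde a} \neq 1$, we have $A \subseteq \R_{\neq 1}$, and $A$ is countable because $\tilde A$ is. The elementary identity $\log(y) - \log(x) = \log(y/x)$ shows that for $x,y \in E \subseteq [1,2]$ one has $\log y - \log x = \tilde a$ exactly when $y/x = e^{\tilde a}$. Hence if $E$ is a set whose quotient set $E/E$ avoids $A$, then $\tilde E := \log E$ is a compact subset of $[0,\log 2]$ (compact and contained in $[0,\log 2]$ because $\log$ is continuous and increasing on $[1,2]$) whose difference set $\tilde E - \tilde E$ avoids $\tilde A$.

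The only real point is to guarantee that the transported set still has positive $\mathcal{H}^h$ measure for the \emph{original} $h$, and here one must pay attention to the Lipschitz constant of the exponential. On $[0,\log 2]$ the map $\exp$ has derivative bounded by $e^{\log 2} = 2$, so it is Lipschitz with constant $2$ and $|\exp V| \le 2|V|$ for every $V \subseteq [0,\log 2]$. To absorb this factor I would not apply Corollary \ref{quotients} to $h$ directly, but to the auxiliary gauge $h^*(t) := h(t/2)$. One checks at once that $h^*$ is again a dimension function and that $h^* \prec x$: indeed $\lim_{t \to 0^+} t/h^*(t) = 2 \lim_{u \to 0^+} u/h(u) = 0$. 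Corollary \ref{quotients} applied to $h^*$ and $A$ then produces a compact $E \subseteq [1,2]$ with $\mathcal{H}^{h^*}(E) > 0$ and $(E/E) \cap A = \emptyset$.

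It then remains to compare the measures of $E$ and $\tilde E = \log E$. Given any $\delta$-covering $\{V_i\}$ of $\tilde E$, the sets $\{\exp V_i\}$ form a $2\delta$-covering of $E$, and since $h$ is increasing, $h^*(|\exp V_i|) = h(|\exp V_i|/2) \le h(|V_i|)$. Summing and taking the infimum over all such coverings yields $\mathcal{H}^{h^*}_{2\delta}(E) \le \mathcal{H}^h_\delta(\tilde E)$, and letting $\delta \to 0$ gives $\mathcal{H}^h(\tilde E) \ge \mathcal{H}^{h^*}(E) > 0$. Thus $\tilde E$ has all the required properties.

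I expect the main obstacle to be precisely this measure bookkeeping: for a general (non-doubling) dimension function $h$ one cannot simply invoke bi-Lipschitz invariance of $\mathcal{H}^h$ up to constants, so the device of pre-composing with the rescaled gauge $h^*(t) = h(t/2)$ — which exactly cancels the Lipschitz constant of $\exp$ — is what makes the reduction go through cleanly. Everything else (countability of $A$, compactness of $\tilde E$, and the difference-to-quotient dictionary) is routine.
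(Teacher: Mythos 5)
Your reduction is exactly the paper's: set $A := \{e^{\tilde a} : \tilde a \in \tilde A\}$, apply Corollary \ref{quotients}, and take $\tilde E := \log(E)$, so that the quotient-avoidance of $E$ becomes difference-avoidance of $\tilde E$. The only divergence is in the measure-transfer step. The paper applies Corollary \ref{quotients} to $h$ itself and simply asserts $\mathcal{H}^h(\tilde E)>0$ ``because $\log|_{[1,2]}$ is bilipschitz,'' whereas you apply it to the rescaled gauge $h^*(t) := h(t/2)$ and then transfer the measure by hand through $\exp$, which is a correct and self-contained way to do it. One remark on your closing paragraph, though: the ``obstacle'' you identify is not actually an obstacle. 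Positivity (indeed, vanishing) of $\mathcal{H}^h$ \emph{is} preserved under bi-Lipschitz maps for an arbitrary dimension function, doubling or not: if $f$ is $L$-Lipschitz and $\{U_i\}$ is a $\delta$-covering, cover each image $f(U_i)$ by an interval of length $L|U_i|$ and subdivide it into $n := \lceil L\rceil + 1$ intervals of length $L|U_i|/n \leq |U_i|$; monotonicity of $h$ then gives $\mathcal{H}^h_{\delta}(f(X)) \leq n\, \mathcal{H}^h_{\delta}(X)$, with no doubling condition needed. So the paper's one-line justification is legitimate (though it deserves this subdivision argument, since the naive estimate $h(L t) \lesssim h(t)$ indeed fails for non-doubling $h$). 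Your gauge-rescaling device $h^*(t)=h(t/2)$ is an equally clean alternative that sidesteps the issue entirely; both routes buy the same conclusion, yours at the cost of invoking the quotient corollary for a slightly different gauge, the paper's at the cost of a suppressed (but valid) covering lemma.
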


\begin{proof}
We define $A:=e^{\tilde{A}}:=\{ e^{\tilde{a}}: \ \tilde{a} \in \tilde{A} \} \subseteq (0, +\infty) \setminus \{ 1 \}$.
By Corollary \ref{quotients} we have a compact set $E \subseteq [1,2]$ such that $\mathcal{H}^h(E)>0$ and $\frac{y}{x}\neq a$ for all distinct $x,y \in E$ and all $a \in A$.
We choose $\tilde{E}:=\log (E)$. We have $\mathcal{H}^h(\tilde{E})>0$, because $\log|_{[1,2]} : [1,2] \to [0, \log(2)]$ is a bilipschitz function. For every $\tilde{a} \in \tilde{A}$, and distinct $\tilde{x}, \tilde{y} \in \tilde{E}$, we have $\tilde{a}=\log(a)$ with $a \in A$, $\tilde{x}=\log(x)$ and $\tilde{y}=\log(y)$ where $x, y \in E$ are distinct, so $$\tilde{y}-\tilde{x}=\log(\frac{y}{x})\neq \log(a)=\tilde{a}.$$
\end{proof}

In particular, if $\tilde{A}$ is a countable and dense set, we obtain a set $\tilde{E}$ of positive $\mathcal{H}^h$-measure whose set of differences has empty interior.
This contrasts with Steinhaus' Theorem, asserting that the difference set of a set of positive Lebesgue measure contains an interval.

\begin{corollary}\label{planos}
Given a dimension function $h$ with $h \prec x$ and given a countable set of planes  $\{ \pi_k \}_k$ in $\R^3$ containing the origin, there exists a compact set $E \subseteq \R$ with $\mathcal{H}^h (E)>0$ such that $$(x,y,z) \notin \pi_k \ \forall k \text{ for all distinct } x,y,z \in E.$$
\end{corollary}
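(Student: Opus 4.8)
The plan is to apply Theorem \ref{supergeneral} directly in the special case $d=1$, $m_k\equiv 3$. First I would record the analytic description of a plane through the origin: since $\pi_k\subseteq\R^3$ passes through the origin, it is the kernel of a non-zero linear functional, i.e. there exists $(a_k,b_k,c_k)\neq(0,0,0)$ with $\pi_k=\{(X,Y,Z)\in\R^3 : a_kX+b_kY+c_kZ=0\}$. I would then set $\psi_k\colon \R^3\to\R$, $\psi_k(x,y,z):=a_kx+b_ky+c_kz$. Because the normal vector is non-zero, each $\psi_k$ is a non-zero linear function of $m_k=3\geq 2$ real variables, so the sequence $(\psi_k)_k$ satisfies the hypotheses of Theorem \ref{supergeneral} with $d=1$.

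Next, since $d=1$ we have $x^d=x$, so the hypothesis $h\prec x$ of the corollary is exactly the hypothesis $h\prec x^d$ demanded by Theorem \ref{supergeneral}. Feeding $(\psi_k)_k$ into that theorem produces a compact set $E\subseteq\R$ with $\mathcal{H}^h(E)>0$ such that $\psi_k(x,y,z)\neq 0$ for every $k$ and all distinct $x,y,z\in E$. Unwinding the definition of $\psi_k$, this says precisely that $a_kx+b_ky+c_kz\neq 0$, equivalently $(x,y,z)\notin\pi_k$, for all $k$ and all pairwise distinct $x,y,z\in E$, which is the desired conclusion. (If one prefers $E\subseteq[1,2]$ as in the construction, that can be kept verbatim; the corollary only asserts $E\subseteq\R$.)

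There is essentially no obstacle here beyond bookkeeping, exactly as in the earlier corollaries of this section: the whole content lives in Theorem \ref{supergeneral}, and this statement is merely its case $d=1$, $m_k=3$. The only points requiring minor care are the translation between the geometric formulation (a triple of points avoiding a fixed plane through the origin) and the analytic one (a linear functional staying away from $0$), and the observation that, when the ambient vectors are one-dimensional, the phrase ``distinct $x,y,z$'' in the corollary coincides with the ``distinct vectors $\vv{x_1},\vv{x_2},\vv{x_3}$'' hypothesis of the theorem, namely pairwise distinctness.
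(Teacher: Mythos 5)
Your proposal is correct and is essentially identical to the paper's own proof: the paper likewise writes each plane as the zero set of $\psi_k(x,y,z)=a_kx+b_ky+c_kz$ and applies Theorem \ref{supergeneral} with $d=1$ and $m_k=3$. The extra details you supply (non-vanishing of the normal vector, matching $h\prec x$ with $h\prec x^d$, and the meaning of distinctness when $d=1$) are just careful bookkeeping of the same argument.
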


\begin{proof}
Each of those planes $\pi_k$ is given by an equation $\psi_k(x,y,z):= a_k x+ b_k y + c_k z =0$.
Taking $d=1$, $m_k=3$ for all $k$, and $\psi_k$ as above, and applying Theorem \ref{supergeneral} the result follows.
\end{proof}

\begin{corollary}\label{proporciones}
Given a dimension function $h$ with $h \prec x$ and a countable set $A \subseteq (1,+\infty)$, there exists a compact set $E \subseteq \R$ with $\mathcal{H}^h (E)>0$ such that $$\frac{z-x}{z-y}\notin A \quad \forall x<y<z \text{ in } E.$$
\end{corollary}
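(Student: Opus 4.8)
The plan is to reduce to a direct application of Theorem \ref{supergeneral} with $d=1$ and $m_k=3$, in the same spirit as Corollaries \ref{quotients} and \ref{planos}. The first step is to rewrite the forbidden relation $\frac{z-x}{z-y}=a$ as a linear equation. For $a \in A$ and $y \neq z$, clearing denominators in $\frac{z-x}{z-y}=a$ gives $z-x=a(z-y)$, which rearranges to $-x+ay-(a-1)z=0$. Enumerating $A=\{a_k\}_{k \in \N}$, this suggests defining the linear functions $\psi_k(x,y,z):=-x+a_k y-(a_k-1)z$.

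Next I would verify the hypotheses of Theorem \ref{supergeneral}. Each $\psi_k$ is a linear map $\R^3 \to \R$, and it is non-zero since the coefficient of $x$ equals $-1$. Applying the theorem to $(\psi_k)_k$ with $d=1$ and $m_k=3$ then produces a compact set $E \subseteq \R$ with $\mathcal{H}^h(E)>0$ and $\psi_k(x,y,z)\neq 0$ for every $k$ and all distinct $x,y,z \in E$; note that the theorem gives this for \emph{every} ordering of the three points, which is more than we need here.

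Finally I would translate the conclusion back. If $x<y<z$ lie in $E$, then they are distinct and $z-y\neq 0$, so the equivalence established in the first step shows that $\psi_k(x,y,z)\neq 0$ is the same as $\frac{z-x}{z-y}\neq a_k$. Since $\psi_k(x,y,z)\neq 0$ holds for every $k$, we conclude $\frac{z-x}{z-y}\notin A$, as desired.

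I expect no genuine obstacle: the argument is essentially identical to the earlier corollaries, the only computational content being the rearrangement of the ratio into a linear form. The one point worth a remark is the role of the hypothesis $A \subseteq (1,+\infty)$. For ordered triples $x<y<z$ one has $z-x>z-y>0$, so $\frac{z-x}{z-y}>1$ automatically; hence values of $a$ outside $(1,+\infty)$ could never be attained in any event. Thus restricting $A$ to $(1,+\infty)$ captures exactly the meaningful forbidden ratios and loses no generality.
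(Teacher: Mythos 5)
Your proof is correct and essentially the same as the paper's: the paper invokes Corollary \ref{planos} with the planes $x-\alpha_k y+(\alpha_k -1)z=0$, which is itself just Theorem \ref{supergeneral} applied with $d=1$ and $m_k=3$, so your direct application of the theorem with the same linear forms (up to an overall sign) merely inlines that intermediate step. Your closing observation explaining why the restriction $A \subseteq (1,+\infty)$ loses no generality is a correct and worthwhile remark.
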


\begin{proof}
If we choose $\pi_k: \   x-\alpha_k y+(\alpha_k -1)z=0$ in Corollary \ref{planos}, the result follows.
\end{proof}

In particular, choosing $A=\{2\}$, there exists a compact set $E \subseteq \R$  with $\mathcal{H}^h (E)>0$ that does not contain any arithmetic progression of length $3$. Note that choosing e.g. $h(x):=-\log(x)x$, we recover the result of Keleti (\cite{Kel}) mentioned in the introduction.

\begin{corollary}\label{masvariables}
It is an equivalent result if we consider $\psi_k: \R^{m_kd} \to \R^{N_k}$ in Theorem \ref{supergeneral}.
\end{corollary}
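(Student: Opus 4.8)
The plan is to show equivalence in both directions, with essentially all the content lying in one of them. First, the stated vector-valued version trivially implies the original Theorem \ref{supergeneral}: a scalar linear function $\psi_k : \R^{m_k d} \to \R$ is just the special case $N_k = 1$, so the hypotheses and conclusion coincide verbatim. Hence it remains only to deduce the vector-valued statement from Theorem \ref{supergeneral}.

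For the converse, I would start from a countable family of non-zero linear functions $\psi_k : \R^{m_k d} \to \R^{N_k}$ and write each one in coordinates as $\psi_k = (\psi_k^{(1)}, \ldots, \psi_k^{(N_k)})$, where each $\psi_k^{(j)} : \R^{m_k d} \to \R$ is a scalar linear function. The key observation is that avoiding a vector pattern is a \emph{weaker} requirement than avoiding a scalar one: the condition $\psi_k(\vv{x_1}, \ldots, \vv{x_{m_k}}) \neq \vv{0}$ asks only that the output vector be non-zero, i.e. that at least one of its components be non-zero. Since $\psi_k$ is a non-zero linear map, at least one of its coordinates, say $\psi_k^{(j_k)}$, is a non-zero scalar linear function; I would fix such an index $j_k$ for every $k$.

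I would then apply Theorem \ref{supergeneral} to the countable family of non-zero scalar linear functions $(\psi_k^{(j_k)})_{k \in \N}$, using the same dimension function $h \prec x^d$ and the same arities $m_k \geq 2$. This produces a compact set $E \subseteq \R^d$ with $\mathcal{H}^h(E) > 0$ such that $\psi_k^{(j_k)}(\vv{x_1}, \ldots, \vv{x_{m_k}}) \neq 0$ for every $k$ and all distinct $\vv{x_1}, \ldots, \vv{x_{m_k}} \in E$. Because $\psi_k^{(j_k)}$ is a single coordinate of $\psi_k$, its non-vanishing forces $\psi_k(\vv{x_1}, \ldots, \vv{x_{m_k}}) \neq \vv{0}$, which is precisely the vector-valued conclusion, and the same set $E$ works for the whole family. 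There is no genuine obstacle here: the one point deserving explicit mention is that $\psi_k$ being non-zero as a linear map is exactly what guarantees the existence of a non-zero coordinate $\psi_k^{(j_k)}$ to feed into Theorem \ref{supergeneral}, so the component-selection step is always well defined.
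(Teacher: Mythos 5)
Your proof is correct and takes essentially the same route as the paper: decompose the vector-valued $\psi_k$ into its scalar coordinate functions and apply Theorem \ref{supergeneral} to the resulting countable family of non-zero scalar linear maps, noting that non-vanishing of a component forces non-vanishing of the vector. The only (immaterial) difference is that the paper retains all non-zero components of each $\psi_k$ while you select a single non-zero component per map; both choices yield the conclusion.
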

\begin{proof}
It is clear that this is more general than Theorem \ref{supergeneral}. And Theorem \ref{supergeneral} implies this statement, since given $\psi_k: \R^{m_kd} \to \R^{N_k}$ we can separate it into $N_k$ linear functions, discard the zero functions, and apply the theorem.
\end{proof}

\begin{corollary}
Let $d \in \N$ and let $h$ a dimension function such that $h \prec x^d$.
Then there exists a compact set $E \subseteq \R^d$ such that $\mathcal{H}^h(E)>0$, and $E$ does not contain the vertices of any parallelogram.
\end{corollary}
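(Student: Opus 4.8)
The plan is to recognise the property ``four distinct points are the vertices of a parallelogram'' as the disjunction of finitely many linear patterns, and then to invoke Theorem~\ref{supergeneral} in the vector-valued form provided by Corollary~\ref{masvariables}. First I would recall the elementary characterisation: four distinct points $\vv{x_1},\vv{x_2},\vv{x_3},\vv{x_4}\in\R^d$ are the vertices of a parallelogram exactly when they can be partitioned into two pairs (the two diagonals) whose midpoints coincide, i.e. when the sum of one opposite pair equals the sum of the other. Since there are precisely three ways to split four points into two pairs, this happens if and only if at least one of the three vector equalities
\begin{align*}
 \vv{x_1}+\vv{x_2}&=\vv{x_3}+\vv{x_4},\\
 \vv{x_1}+\vv{x_3}&=\vv{x_2}+\vv{x_4},\\
 \vv{x_1}+\vv{x_4}&=\vv{x_2}+\vv{x_3}
\end{align*}
holds. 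Because in the theorem we quantify over \emph{all} distinct tuples $(\vv{x_1},\vv{x_2},\vv{x_3},\vv{x_4})$, there is no need to keep track of cyclic orderings separately: ruling out these three equalities rules out every parallelogram, degenerate or not.

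Accordingly I would define the three linear functions $\psi^{(1)},\psi^{(2)},\psi^{(3)}\colon(\R^d)^4\to\R^d$ by $\psi^{(1)}=\vv{x_1}+\vv{x_2}-\vv{x_3}-\vv{x_4}$, $\psi^{(2)}=\vv{x_1}+\vv{x_3}-\vv{x_2}-\vv{x_4}$, and $\psi^{(3)}=\vv{x_1}+\vv{x_4}-\vv{x_2}-\vv{x_3}$. Each is a non-zero linear function with $m=4\geq 2$ arguments mapping into $\R^{N}$ with $N=d$, so the finite (hence countable) family $\{\psi^{(1)},\psi^{(2)},\psi^{(3)}\}$ satisfies the hypotheses of Corollary~\ref{masvariables}. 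Applying that corollary with the given $h\prec x^d$ yields a compact set $E\subseteq\R^d$ with $\mathcal{H}^h(E)>0$ such that $\psi^{(j)}(\vv{x_1},\vv{x_2},\vv{x_3},\vv{x_4})\neq\vv{0}$ for $j=1,2,3$ and for all distinct $\vv{x_1},\vv{x_2},\vv{x_3},\vv{x_4}\in E$.

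To finish, I would combine the two ingredients. A parallelogram has four distinct vertices, and by the characterisation its vertices satisfy (in some labelling) one of the three equalities above, i.e. $\psi^{(j)}(\vv{x_1},\vv{x_2},\vv{x_3},\vv{x_4})=\vv{0}$ for some $j$. Since $E$ avoids all three patterns on distinct points, no four distinct points of $E$ can satisfy any such equality, so $E$ contains the vertices of no parallelogram. The only point that genuinely requires care — and hence the ``main obstacle'' — is the geometric reduction in the first paragraph: one must check that the three pairings exhaust all parallelogram configurations and that the distinctness built into Theorem~\ref{supergeneral} matches the distinctness of parallelogram vertices. Once this is in place, the analytic content is entirely inherited from the main theorem.
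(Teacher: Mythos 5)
Your proof is correct and takes essentially the same route as the paper: the paper applies Corollary~\ref{masvariables} to the single function $\psi(\vv{x_1},\vv{x_2},\vv{x_3},\vv{x_4}):=\vv{x_1}-\vv{x_2}+\vv{x_3}-\vv{x_4}$, which is your $\psi^{(2)}$ up to sign and relabeling. Your two additional patterns are harmless but redundant, since the theorem already quantifies over all distinct tuples in every order, so the three pairings are equivalent under permutation of the arguments.
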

\begin{proof}
This follows from Theorem \ref{supergeneral} and Corollary \ref{masvariables}, taking $$\psi:\R^{4d} \to \R^d, \ \ \ \psi(\vv{x_1},\vv{x_2},\vv{x_3},\vv{x_4}):=\vv{x_1}-\vv{x_2}+\vv{x_3}-\vv{x_4}.$$
\end{proof}
The previous corollary is an improvement over Maga's result \cite[Theorem 2.3]{Mag}.

\begin{corollary}\label{trapecios}
Let $d \in \N$ and let $h$ a dimension function such that $h \prec x^d$.
Let $(\alpha_n)_{n \in \N} \subseteq \R_{\neq 0}$. We get a compact set $E \subseteq \R^d$ such that $\mathcal{H}^h(E)>0$, and for all $n \in \N$, $E$ does not contain the vertices of any trapezoid with the lengths of the parallel sides in proportion $\alpha_n$.
\end{corollary}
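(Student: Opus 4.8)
The plan is to encode each forbidden family of trapezoids as a single vector-valued linear pattern and then invoke Theorem~\ref{supergeneral} through Corollary~\ref{masvariables}. The geometric content I must linearize is the following: four distinct points are the vertices of a trapezoid whose parallel sides are in proportion $\alpha_n$ precisely when, after suitably labelling them, the segment determined by two of the points equals $\pm\alpha_n$ times the segment determined by the other two. This is captured by requiring that one vector difference equals $\alpha_n$ times another vector difference, which is a linear relation.

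Concretely, for each $n \in \N$ I would keep the dimension $d$, take $m=4$, and define the linear map
\[
\psi_n : (\R^d)^4 \to \R^d, \qquad \psi_n(\vv{x_1},\vv{x_2},\vv{x_3},\vv{x_4}) := (\vv{x_2}-\vv{x_1}) - \alpha_n(\vv{x_4}-\vv{x_3}).
\]
Since $\alpha_n \neq 0$, the coefficient of $\vv{x_2}$ is $1$, so each $\psi_n$ is a non-zero linear function with $m=4\geq 2$. Applying Theorem~\ref{supergeneral} in the vector-valued form of Corollary~\ref{masvariables} to the countable family $(\psi_n)_{n \in \N}$ yields a compact set $E \subseteq \R^d$ with $\mathcal{H}^h(E)>0$ such that $\psi_n(\vv{x_1},\vv{x_2},\vv{x_3},\vv{x_4}) \neq \vv{0}$ for every $n$ and all distinct $\vv{x_1},\vv{x_2},\vv{x_3},\vv{x_4} \in E$.

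It remains to check that this $E$ contains no trapezoid with parallel sides in proportion $\alpha_n$. Arguing by contradiction, suppose four distinct points of $E$ form such a trapezoid, and let its two parallel sides be the segments from $A$ to $B$ and from $C$ to $D$. Parallelism together with the length ratio $\alpha_n$ forces $B-A = \pm \alpha_n (D-C)$. In the $+$ case the ordering $(A,B,C,D)$ gives $\psi_n = \vv{0}$, and in the $-$ case the ordering $(A,B,D,C)$ does, since then $B-A=\alpha_n(C-D)$; in either case we contradict the defining property of $E$. The point to emphasize is that, because Theorem~\ref{supergeneral} quantifies over all orderings of the tuple, the single function $\psi_n$ already absorbs the sign ambiguity of the parallel direction as well as the ambiguity of which parallel side is taken to be $\alpha_n$ times the other (the latter handled by the relabelling that swaps the two pairs). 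Hence no reciprocal proportion $1/\alpha_n$ need be added to the family, and collinear degenerate configurations satisfying the relation are excluded too, which is harmless since we only need every genuine trapezoid to fall under the forbidden relation.

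The main obstacle is conceptual rather than computational: one must be confident that the nonlinear-looking geometric data (two sides being genuinely parallel with a prescribed length ratio) collapses to the purely linear relation $B-A = \pm\alpha_n(D-C)$, and that the finitely many labellings and sign choices are all subsumed by the built-in quantification over orderings in Theorem~\ref{supergeneral}. Once this encoding is accepted, the construction is an immediate application, with the positivity of $\mathcal{H}^h(E)$ supplied verbatim by the theorem.
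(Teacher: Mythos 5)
Your proposal is correct and follows essentially the same route as the paper: the paper's proof takes $\psi_n(\vv{x_1},\vv{x_2},\vv{x_3},\vv{x_4}):=\vv{x_1}-\vv{x_2}-\alpha_n(\vv{x_3}-\vv{x_4})$, which is your $\psi_n$ up to an overall sign and relabelling, and likewise concludes by Theorem \ref{supergeneral} together with Corollary \ref{masvariables}. Your additional verification that the sign and labelling ambiguities of a trapezoid are absorbed by the quantification over all orderings of distinct points is exactly the (implicit) reasoning behind the paper's one-line proof.
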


\begin{proof}
Taking $\psi_n: \R^{4d} \to \R^d$ given by $\psi_n(\vv{x_1}, \vv{x_2}, \vv{x_3}, \vv{x_4}):= \vv{x_1}- \vv{x_2}- \alpha_n (\vv{x_3}-\vv{x_4})$, the result follows by applying Theorem \ref{supergeneral} and Corollary \ref{masvariables}.
\end{proof}

We have the following complex version:
\begin{corollary}\label{complejogeneral}
Let $h$  be a dimension function with $h \prec x^{2s}$ (with $s \in \N$), $m \geq 2$, and consider a sequence of $\R$-linear functions  $(\psi_k)_{k \in \N}$ such that $\psi_k: \C^{ms} \to \C$.

Then there exists a compact set $E \subseteq \C^s$ such that $\mathcal{H}^h(E)>0$ and $\psi_k(\vv{x_1}, \cdots ,\vv{x_m})\neq 0$ for all distinct $\vv{x_1}, \cdots, \vv{x_m} \in E$.
\end{corollary}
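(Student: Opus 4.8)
The plan is to reduce the complex statement to the real version already proved in Theorem~\ref{supergeneral} via the standard identification $\C^s \cong \R^{2s}$. First I would fix an $\R$-linear isomorphism $T \colon \C^s \to \R^{2s}$ sending a complex vector to its real and imaginary coordinates, namely $T(w_1,\ldots,w_s) = (\operatorname{Re} w_1, \operatorname{Im} w_1, \ldots, \operatorname{Re} w_s, \operatorname{Im} w_s)$. Since $T$ is an $\R$-linear bijection between finite-dimensional real vector spaces, it is bilipschitz, and hence preserves the property $\mathcal{H}^h(\,\cdot\,)>0$: a set $F \subseteq \R^{2s}$ has $\mathcal{H}^h(F)>0$ if and only if $\mathcal{H}^h(T^{-1}F)>0$, exactly as in the use of the logarithm in the proof of the second corollary above.

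Next I would translate each $\C$-linear constraint into real-linear constraints over $\R^{2s}$. Given $\psi_k \colon \C^{ms} \to \C$, define $\widetilde{\psi}_k \colon (\R^{2s})^m \to \R^2$ by $\widetilde{\psi}_k(\vv{u_1},\ldots,\vv{u_m}) := (\operatorname{Re}, \operatorname{Im})\,\psi_k(T^{-1}\vv{u_1},\ldots,T^{-1}\vv{u_m})$. Because $\psi_k$ is $\R$-linear (not necessarily $\C$-linear) and $T^{-1}$ is $\R$-linear, $\widetilde{\psi}_k$ is an $\R$-linear map into $\R^2$, and $\psi_k(\vv{x_1},\ldots,\vv{x_m}) = 0$ in $\C$ if and only if $\widetilde{\psi}_k(T\vv{x_1},\ldots,T\vv{x_m}) = 0$ in $\R^2$. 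Note the hypothesis $m \geq 2$ matches the requirement $m_k \geq 2$, and $h \prec x^{2s}$ is precisely the hypothesis $h \prec x^{d}$ with ambient dimension $d = 2s$.

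I would then appeal to Corollary~\ref{masvariables}, which allows $\R^{2s}$-valued target (here $\R^2$) rather than scalar target, applied with $d = 2s$ and the countable family $(\widetilde{\psi}_k)_{k \in \N}$, after discarding any $\widetilde{\psi}_k$ that happen to be identically zero. This yields a compact set $F \subseteq \R^{2s}$ with $\mathcal{H}^h(F)>0$ such that $\widetilde{\psi}_k(\vv{u_1},\ldots,\vv{u_m}) \neq 0$ for all $k$ and all distinct $\vv{u_1},\ldots,\vv{u_m} \in F$. Setting $E := T^{-1}(F) \subseteq \C^s$, compactness is preserved by the homeomorphism $T^{-1}$, the bilipschitz property gives $\mathcal{H}^h(E)>0$, and distinctness of points is preserved by the bijection, so the translated constraint gives $\psi_k(\vv{x_1},\ldots,\vv{x_m}) \neq 0$ for all distinct $\vv{x_1},\ldots,\vv{x_m} \in E$.

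I do not anticipate a genuine obstacle here; the statement is essentially a change-of-coordinates corollary. The only point requiring a small amount of care is being explicit that $\R$-linearity (rather than $\C$-linearity) of $\psi_k$ is exactly what is needed for $\widetilde{\psi}_k$ to be $\R$-linear, so that Theorem~\ref{supergeneral} applies verbatim; if the $\psi_k$ were only assumed $\C$-linear the same argument would still work, but the $\R$-linear hypothesis is what makes the reduction clean. A secondary bookkeeping point is confirming that the bilipschitz constant of $T$ depends only on $s$ and not on the patterns, which is immediate since $T$ is a fixed linear isomorphism.
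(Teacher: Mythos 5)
Your proposal is correct and follows exactly the paper's route: the paper's proof is the one-line "take $d=2s$ and identify $\C$ with $\R^2$ in Theorem~\ref{supergeneral} and Corollary~\ref{masvariables}," and your argument simply spells out that identification, the induced $\R^2$-valued patterns, and the bilipschitz preservation of $\mathcal{H}^h$-positivity. No discrepancy in substance; you have just made explicit the details the paper leaves to the reader.
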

\begin{proof}
We take $d=2s$ and identify $\C$ with $\R^2$ in Theorem \ref{supergeneral} and Corollary \ref{masvariables}.
\end{proof}

\begin{corollary}\label{teo contables ternas}
Let $h$ be a dimension function with $h \prec x^2$, and let $(P_n)_{n\in \N}=(x_n,y_n,z_n)_{n\in \N}$ a sequence of triplets of different complex numbers.
Then there exists a compact set $E \subseteq \C$, with $\mathcal{H}^h(E)>0$, that does not contain a similar copy of any of the given triplets
\end{corollary}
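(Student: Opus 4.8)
The plan is to encode the relation ``the ordered triple $(u,v,w)$ is a similar copy of $(x_n,y_n,z_n)$'' as the vanishing of a $\C$-linear functional, so that Corollary \ref{complejogeneral} applies directly with $s=1$, $m=3$, and the hypothesis $h \prec x^2$.

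First I would recall that three complex numbers $u,v,w$ are directly similar to $x_n,y_n,z_n$ (in this order) exactly when there is an orientation-preserving similarity $z \mapsto az+b$, $a \neq 0$, carrying one triple to the other. Since $x_n,y_n,z_n$ are distinct, writing $u=ax_n+b$, $v=ay_n+b$, $w=az_n+b$ and eliminating $a,b$ shows this is equivalent to the equality of shape ratios
$$\frac{w-u}{v-u} = \frac{z_n - x_n}{y_n - x_n} =: \lambda_n,$$
where $\lambda_n \in \C\setminus\{0,1\}$ is a fixed constant. Clearing denominators yields the relation $(\lambda_n - 1)u - \lambda_n v + w = 0$, so the map
$$\psi_n(u,v,w) := (\lambda_n - 1)u - \lambda_n v + w$$
is $\C$-linear, hence $\R$-linear, from $\C^3$ to $\C$, and is non-zero because the coefficient of $w$ is $1$.

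Next I would observe that the unoriented notion of ``similar copy'' is covered by finitely many such functionals per triplet. Indirect (mirror) similarities $z \mapsto a\bar z + b$ give the analogous condition with $\lambda_n$ replaced by $\bar\lambda_n$, again a non-zero $\C$-linear functional; and if one wishes to forbid every labelling of the vertices, one adjoins the (at most $6$, resp.\ $12$ with reflections) functionals obtained by permuting the roles of $x_n,y_n,z_n$. In every case the coefficients are explicit constants, so each resulting functional is $\R$-linear and non-zero, and enlarging the list this way preserves countability: we obtain a countable family $(\psi_k)_{k\in\N}$ of non-zero $\R$-linear maps $\C^3 \to \C$.

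Finally I would apply Corollary \ref{complejogeneral} with $s=1$ and $m=3$ to this family, producing a compact $E \subseteq \C$ with $\mathcal{H}^h(E)>0$ on which no $\psi_k$ vanishes at a triple of distinct points. Since each source triple consists of distinct points and similarities are injective, any similar copy inside $E$ is automatically a triple of distinct points $u,v,w \in E$ with $\psi_k(u,v,w)=0$ for the corresponding $k$, contradicting the conclusion of the corollary; hence $E$ contains no similar copy of any $P_n$. The only real content is the translation step: checking that similarity of triples in $\C$ is a $\C$-linear (in particular $\R$-linear) condition on the vertices, and that the reflection and relabelling ambiguities cost only finitely many extra functionals. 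Once this is in place the result is immediate from Corollary \ref{complejogeneral}.
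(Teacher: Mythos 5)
Your proof is correct and takes essentially the same route as the paper: encode similarity of triples via the shape ratio as the vanishing of a non-zero $\C$-linear functional and apply Corollary \ref{complejogeneral} with $s=1$, $m=3$ (the paper sets $\alpha_n:=\frac{z_n-x_n}{z_n-y_n}$ and $\psi_n(x,y,z)=x-\alpha_n y+(\alpha_n-1)z$, the same device up to normalization). Two side remarks: the permuted functionals you adjoin are redundant, because the theorem's conclusion already forbids vanishing on \emph{every} ordered tuple of distinct points of $E$, so any labelling of a similar copy is automatically caught; on the other hand, your conjugate functionals with $\bar\lambda_n$ genuinely add the exclusion of mirror (orientation-reversing) similar copies, a case the paper's single-functional proof does not address.
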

 \begin{proof}
Take $m=3$ and for each $n \in \N$ define $\alpha_n:=\frac{z_n - x_n}{z_n - y_n}$, $\psi_n(x,y,z)=(\alpha_n -1)z-\alpha_n y+x$, and apply Corollary \ref{complejogeneral}.
 \end{proof}

In particular, taking $h(x):=-x^2 \log(x)$, we recover the results of Maga \cite[Theorem 2.8]{Mag}, and Falconer \cite{Fal} mentioned in the introduction.



\begin{thebibliography}{10}

\bibitem{Beh}
F.~A. Behrend.
\newblock On sets of integers which contain no three terms in arithmetical
  progression.
\newblock {\em Proc. Nat. Acad. Sci. U. S. A.}, 32:331--332, 1946.

\bibitem{besicovitch}
A.~S. Besicovitch.
\newblock On the definition of tangents to sets of infinite linear measure.
\newblock {\em Proc. Cambridge Philos. Soc.}, 52:20--29, 1956.

\bibitem{Bloom16}
T.~F. Bloom.
\newblock A quantitative improvement for {R}oth's theorem on arithmetic
  progressions.
\newblock {\em J. Lond. Math. Soc. (2)}, 93(3):643--663, 2016.

\bibitem{CLP}
V. Chan, I. {\L}aba, and M. Pramanik.
\newblock Finite configurations in sparse sets.
\newblock {\em J. Anal. Math.}, 128:289--335, 2016.

\bibitem{Fal}
K.~J. Falconer.
\newblock On a problem of {E}rd\H{o}s on fractal combinatorial geometry.
\newblock {\em J. Combin. Theory Ser. A}, 59(1):142--148, 1992.

\bibitem{FP}
R.~Fraser and M.~Pramanik.
\newblock Large sets avoiding patterns.
\newblock Preprint, arXiv:1609.03105, 2016.

\bibitem{Ios}
A. Iosevich and B. Liu.
\newblock Equilateral triangles in subsets of $\mathbb{R}^d$ of large Hausdorff
  dimension.
\newblock Preprint, arXiv:1603.01907, 2016.

\bibitem{Kel2}
T. Keleti.
\newblock A 1-dimensional subset of the reals that intersects each of its
  translates in at most a single point.
\newblock {\em Real Anal. Exchange}, 24(2):843--844, 1998/99.

\bibitem{Kel}
T. Keleti.
\newblock Construction of one-dimensional subsets of the reals not containing
  similar copies of given patterns.
\newblock {\em Anal. PDE}, 1(1):29--33, 2008.

\bibitem{Mag}
P. Maga.
\newblock Full dimensional sets without given patterns.
\newblock {\em Real Anal. Exchange}, 36(1):79--90, 2010/11.

\bibitem{UMAY}
U.~Molter and A.~Yavicoli.
\newblock Small sets containing any pattern.
\newblock Preprint, arXiv:1610.03804, 2016.

\bibitem{Mat}
A. Máthé.
\newblock Sets of large dimension not containing polynomial configurations.
\newblock {\em Adv. Math.} 316, 691–709, 2017.


\bibitem{Roth}
K. Roth.
\newblock Sur quelques ensembles d'entiers.
\newblock {\em C. R. Acad. Sci. Paris}, 234:388--390, 1952.

\bibitem{Sanders11}
T. Sanders.
\newblock On {R}oth's theorem on progressions.
\newblock {\em Ann. of Math. (2)}, 174(1):619--636, 2011.

\end{thebibliography}

\end{document}